\pgfplotsset{ticks=none}
\newtheorem{thm}{Theorem}[section]
\newtheorem{lem}[thm]{Lemma}
\newtheorem{prop}[thm]{Proposition}
\newtheorem{defn}[thm]{Definition}
\DeclareMathOperator{\rank}{rank}
\DeclareMathOperator{\sgn}{sgn}
\DeclareMathOperator{\sym}{sym}
\DeclareMathOperator{\tri}{tri}
\title[Upper semicontinuity of the lamination hull]{Upper semicontinuity of the lamination hull}
\author[Terence L. J. Harris]{Terence L. J. Harris}
\address{School of Mathematics and Statistics, University of New South Wales, Sydney, NSW 2052, Australia.}
\address{Department of Mathematics, University of Illinois, Urbana, IL 61801, U.S.A.}
\email{terence2@illinois.edu}
\thanks{{\it Maths Subject Classification (2000):} 49J45 \and 52A30}
\thanks{This research was supported by the Australian Research Council's Discovery Projects funding scheme (project DP140100531).}
\date{}
\numberwithin{equation}{section}
\begin{document}
\begin{abstract}
Let $K \subseteq \mathbb{R}^{2 \times 2}$ be a compact set, let $K^{rc}$ be its rank-one convex hull, and let $L(K)$ be its lamination convex hull. It is shown that the mapping $K \mapsto \overline{L(K)}$ is not upper semicontinuous on the diagonal matrices in $\mathbb{R}^{2 \times 2}$, which was a problem left by Kolář. This is followed by an example of a 5-point set of $2 \times 2$ symmetric matrices with non-compact lamination hull. Finally, another 5-point set $K$ is constructed, which has $L(K)$ connected, compact and strictly smaller than $K^{rc}$. \end{abstract}
\keywords{Lamination convexity, rank-one convexity}
\maketitle
\section{Introduction}\label{s:1}
 \begin{sloppypar}
\label{intro} Let $\mathbb{R}^{m \times n}$ denote the space of $m \times n$ matrices with real entries. Two matrices \mbox{$X,Y\in \mathbb{R}^{m \times n}$} with $\rank(X-Y) = 1$ are called \textit{rank-one connected}.  A set $\mathcal{S} \subseteq \mathbb{R}^{m \times n}$ is \textit{lamination convex} if 
\[ \lambda X + (1-\lambda)Y \in \mathcal{S} \text{ for all } \lambda \in [0,1], \]
whenever $X,Y \in \mathcal{S}$ are rank-one connected. For a set $K \subseteq \mathbb{R}^{m \times n}$, the smallest lamination convex set containing $K$ is denoted by $L(K)$.  \end{sloppypar}

This work contains a counterexample to a question posed in \cite{kolar}, concerning the continuity of the mapping $K \mapsto \overline{L(K)}$ on $\mathbb{R}^{2 \times 2}$. The example is similar to Example 2.2 in \cite{aumann}. This is followed by a 5-point set $K$ of symmetric $2 \times 2$ matrices with non-compact $L(K)$, similar to Example 2.4 in \cite{kolar}. Then, another 5-point set $K$ is constructed which has $L(K)$ connected, compact and strictly smaller than $K^{rc}$. This is contrasted with Proposition 2.5 in \cite{sverak2}, which says that $K^{pc} =L(K)=K$ if $K$ is connected, compact and has no rank-one connections. Finally, a weaker version of this result is given for sets with rank-one connections. 

\section{Main results}\label{s:2}
Define the Hausdorff distance between two compact sets $K_1,K_2$ in $\mathbb{R}^{m \times n}$ by 
\[ \rho(K_1,K_2) = \inf\{ \epsilon \geq 0 : K_1 \subseteq U_{\epsilon}(K_2) \text{ and } K_2 \subseteq U_{\epsilon}(K_1) \}, \]
where $U_{\epsilon}(K)$ is the open $\epsilon$-neighbourhood of $K$, corresponding to the Euclidean distance. Let $\mathcal{K}$ be the set of compact subsets of $\mathbb{R}^{m \times n}$. A function $f: \mathcal{K} \to \mathcal{K}$ is upper semicontinuous if for every $\epsilon>0$ and for every $K_0 \in \mathcal{K}$, there exists a $\delta>0$ such that $f(K) \subseteq U_{\epsilon}(f(K_0))$ whenever $\rho(K,K_0) <\delta$. It is known that the function $K \mapsto K^{rc}$ is upper semicontinuous on the compact subsets of $\mathbb{R}^{m \times n}$ (see for example the proof of Theorem 1 in \cite{szekelyhidi}, Example 4.18 in \cite{kirchheim}, or Theorem 3.2 in \cite{zhang}.) The following example (pictured in Figure \ref{firstfigure}) shows that this fails on diagonal matrices in $\mathbb{R}^{2 \times 2}$, for the lamination convex hull. 
\begin{thm} \label{uppersemthm} There exists a compact set $K_0$ of diagonal matrices in $\mathbb{R}^{2 \times 2}$ such that the mapping $K \mapsto \overline{L(K)}$ is not upper semicontinuous at $K_0$.  \end{thm}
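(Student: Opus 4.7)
Under the identification $\diag(a,b) \leftrightarrow (a,b) \in \mathbb{R}^2$, two diagonal matrices are rank-one connected exactly when they share one coordinate, so a set $\mathcal{S} \subseteq \mathbb{R}^2$ is lamination convex precisely when it contains the horizontal segment between any two of its points sharing a $y$-coordinate, and the vertical segment between any two points sharing an $x$-coordinate. To establish the theorem I will construct a compact $K_0 \subseteq \mathbb{R}^2$ and a sequence $K_n$ with $\rho(K_n, K_0) \to 0$ for which $L(K_n)$ contains points remaining at a fixed positive distance from $\overline{L(K_0)}$.

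The construction follows the template of Aumann's Example 2.2 from the bibliography. I will take $K_0$ to be a finite configuration, of about five points, arranged so that $L(K_0)$ is forced to be small --- either $K_0$ itself when no two points share a coordinate, or the union of just a few axis-parallel segments. Then $K_n$ will be defined by perturbing one point of $K_0$ slightly, so that in $K_n$ some point now shares a coordinate with an interior point of a segment already in $L(K_n)$; this interior coincidence is absent in the limit $K_0$, because the relevant coordinate match-up there fails by a positive margin. The new rank-one operation in $K_n$ produces a segment whose far endpoint then shares a coordinate with yet another point of $K_n$, and iterating yields a cascade of segments in $L(K_n)$ reaching a neighbourhood of a target point $p^*$ well outside $K_0$. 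By choosing $n$ larger and larger, the cascade endpoints $p_n$ can be placed arbitrarily close to $p^*$, while the perturbation shrinks, so that $\rho(K_n,K_0) \to 0$.

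The main technical ingredient, which I expect to be the principal obstacle, is demonstrating that the target $p^*$ is really at positive distance from $\overline{L(K_0)}$, rather than secretly reachable by some other iterated lamination. The natural separation tool is a bi-affine function $f(x,y) = \alpha xy + \beta x + \gamma y + \delta$: such $f$ is affine along every horizontal and every vertical line, hence affine along every rank-one segment in the diagonal matrices, so $\max_{L(K_0)} f = \max_{K_0} f$, and by continuity the bound extends to $\overline{L(K_0)}$. Choosing the coefficients $(\alpha,\beta,\gamma,\delta)$ so that $f \leq 0$ on $K_0$ while $f(p^*) > 0$ separates $p^*$ from $\overline{L(K_0)}$ with a uniform gap, supplying the required $\epsilon > 0$. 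The inequalities $f|_{K_0} \leq 0 < f(p^*)$ will in turn constrain the precise coordinates of the five points of $K_0$ and the target $p^*$; balancing these against the requirement that a small perturbation triggers a genuine cascade reaching $p^*$ is the delicate part of the construction. Once the configuration is pinned down, Hausdorff convergence $K_n \to K_0$ and the explicit verification that $p_n \in L(K_n)$ for a chain $p_n \to p^*$ follow by direct computation on the five perturbed points.
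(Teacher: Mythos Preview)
Your plan has a genuine gap: a finite $K_0$ cannot do the job. If the (roughly five) points of $K_0$ have pairwise distinct coordinates, then so do those of every sufficiently close $K$, hence $L(K)=K$ and upper semicontinuity at $K_0$ is trivial. If instead some coordinates coincide, your central mechanism --- a small perturbation producing a coordinate match with the interior of a segment that is ``absent in the limit $K_0$ by a positive margin'' --- is impossible. The coordinate of the perturbed point and the $x$- (or $y$-) range of the relevant segment both depend continuously on the data; if the point's coordinate lies in that range for every $K_n$, it lies in the closed range for $K_0$ as well, so the very same cascade step is already available in $L(K_0)$. With only finitely many anchor points each cascade has bounded depth and every step is continuous in the configuration, so anything you reach in $L(K_n)$ you also reach in $\overline{L(K_0)}$. (Aumann--Hart's Example~2.2, which you invoke, is itself an infinite configuration, not a five-point one.)

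The paper's construction is essentially different. There $K_0$ is infinite: two geometric sequences accumulating at $(1,0)$, arranged so that no two points of $K_0$ share a coordinate, whence $L(K_0)=K_0$ with no separating function needed. The perturbed set is $K=K_0\cup\{P_N\}$, obtained by \emph{adjoining} a single point within distance $2^{-N}$ of the accumulation point rather than moving an existing one. That extra point triggers a staircase cascade of length $N+1$ through the infinitely many points of $K_0$, terminating at the fixed target $(0,1)$, which sits at distance at least $\tfrac12$ from $K_0=L(K_0)$. The unbounded supply of anchors is precisely what allows an arbitrarily small perturbation to launch an arbitrarily long cascade reaching a fixed far-away point; this is the idea your five-point scheme is missing.
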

\begin{proof} Identify the space of $2 \times 2$ diagonal matrices with $\mathbb{R}^2$ in the natural way. Let
\[ K_0 = \left\{(1,0) \right \}\cup  \bigcup_{n=0}^{\infty} \left\{ \left( 1- \frac{3}{2^{n+1}}, \frac{1}{2^{n+1}} \right), \quad \left(1-\frac{1}{2^n}, \frac{3}{2^{n+1}} \right)  \right\}. \]
The set $K_0$ is compact and has no rank-one connections, thus $L(K_0)=K_0$. For each integer $n \geq -1$ let 
\[ P_n =\left(1-\frac{1}{2^{n+1}}, \frac{1}{2^{n+1}} \right). \] Given $\delta > 0$, choose a positive integer $N$ large enough to ensure that $\frac{1}{2^{N}} < \delta$, and let $K = K_0 \cup \{ P_N\}$, so that $\rho(K,K_0) < \delta$. 
Then 
\[ \left( 1- \frac{1}{2^N}, \frac{1}{2^{N+1}} \right) = \frac{1}{2} \left(1-\frac{3}{2^{N+1}}, \frac{1}{2^{N+1}} \right) + \frac{1}{2} P_N \in L(K), \]
and hence
\[ P_{N-1} = \frac{1}{2}\left( 1- \frac{1}{2^N}, \frac{1}{2^{N+1}} \right) + \frac{1}{2}  \left(1-\frac{1}{2^N}, \frac{3}{2^{N+1}} \right) \in  L(K). \]
It follows by induction that $\left( 0, 1 \right) = P_{-1} \in L(K)$. Since $\rho(P_{-1},L(K_0)) \geq \frac{1}{2}$, this shows that the function $K \mapsto \overline{L(K)}$ is not upper semicontinuous at $K_0$. 
\end{proof}
\begin{figure}[htbp]
\begin{tikzpicture}[scale = 4] 
  \fill (1-3*.5,.5) circle[radius=.3pt];
	\fill (1-3*.25,.25) circle[radius=.3pt];
	\fill (1-3*.125,.125) circle[radius=.3pt];
	\fill (1-3*.0625,.0625) circle[radius=.3pt];
	\fill (1-3*.03125,.03125) circle[radius=.3pt];
	\fill (1-3*.015625,.015625) circle[radius=.3pt];
	\fill (1-3*.0078125,.0078125) circle[radius=.3pt];
	
	\fill (1-1,3*.5) circle[radius=.3pt];
	\fill (1-.5,3*.25) circle[radius=.3pt];
	\fill (1-.25,3*.125) circle[radius=.3pt];
	\fill (1-.125,3*.0625) circle[radius=.3pt];
	\fill (1-.0625,3*.03125) circle[radius=.3pt];
	\fill (1-.03125,3*.015625) circle[radius=.3pt];
	\fill (1-.015625,3*.0078125) circle[radius=.3pt];
	
	\fill (1,0) circle[radius=.3pt];
	
	\draw [dotted] (1-3*.5,.5) -- (1-.5,.5);
	\draw [dotted] (1-3*.5,.5) -- (1-.5,.5);
	\draw [dotted] (1-3*.25,.25) -- (1-.25,.25);
	\draw [dotted] (1-3*.125,.125) -- (1-.125,.125);
	\draw [dotted] (1-3*.0625,.0625) -- (1-.0625,.0625);
	\draw [dotted] (1-3*.03125,.03125) -- (1-.03125,.03125);
	
	\draw [dotted] (1-1,.5) -- (1-1,3*.5);
	\draw [dotted] (1-.5,.25) -- (1-.5,3*.25);
	\draw [dotted] (1-.25,.125) -- (1-.25,3*.125);
	\draw [dotted] (1-.125,.0625) -- (1-.125,3*.0625);
	\draw [dotted] (1-.0625,.03125) -- (1-.0625,3*.03125);
	\draw [dotted] (1-.03125,.015625) -- (1-.03125,3*.015625);

	
\end{tikzpicture}
\caption{The set $K_0$ from Theorem \ref{uppersemthm}. The dotted lines are rank-one lines in $L(K)$, where $K$ is a small perturbation of $K_0$. }
\label{firstfigure}
\end{figure}
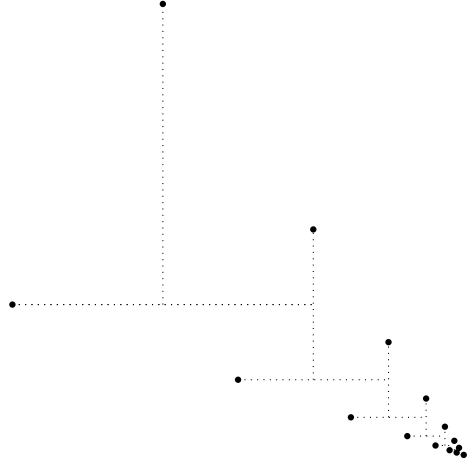

The next result gives two examples of 5-point subsets of $\mathbb{R}^{2 \times 2}$, each with a non-compact lamination hull. The upper-triangular example is pictured in Figure \ref{uppertriag}. It consists of 4 points in the diagonal plane arranged in a $T_4$ configuration, together with a point whose projection onto the diagonal plane is a corner of the inner rectangle of the $T_4$ configuration. 

Throughout, the upper triangular matrix $\begin{pmatrix} x & z \\ 0 & y \end{pmatrix}$ will be identified with the point $(x,y,z) \in \mathbb{R}^3$. The symmetric example uses essentially the same idea as in Figure \ref{uppertriag}, so the matrix $\begin{pmatrix} x & z \\ z & y \end{pmatrix}$ will also be denoted by the point $(x,y,z) \in \mathbb{R}^3$. Since the cases are treated separately, the notations do not conflict. The symmetric notation also differs from the usual identification, used for example in \cite{kolar}. The space of $2 \times 2$ upper triangular matrices is denoted by $\mathbb{R}^{2 \times 2}_{\tri}$, and the space of  $2 \times 2$ symmetric matrices by $\mathbb{R}^{2 \times 2}_{\sym}$. Up to linear isomorphisms preserving rank-one directions, these are the only two 3-dimensional subspaces of $\mathbb{R}^{2 \times 2}$ (see \cite[Corollary 6]{conti} or \cite[Lemma 3.1]{zimmerman})

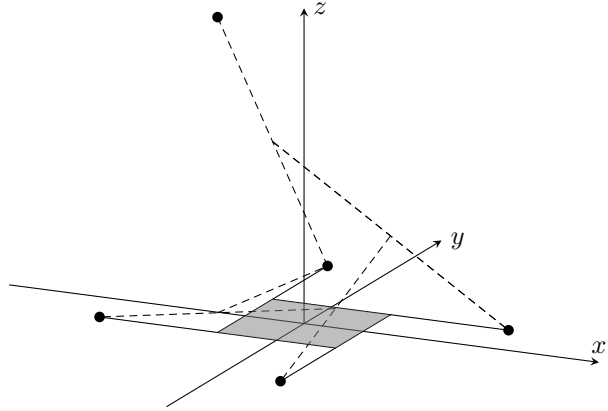
\begin{figure}[htbp]
\begin{tikzpicture}
\begin{axis}[
  axis lines=center,
  width=13cm,height=9cm,
  xmin=-5,xmax=5,ymin=-5,ymax=5,zmin=0,zmax=1,
]
\node [above] at (axis cs:5,0,0) {$x$};
\node [right] at (axis cs:0,5,0) {$y$};
\node [right] at (axis cs:0,0,1) {$z$};


\addplot3 [no marks] coordinates {(-1,1,0) (1,1,0) (1,-1,0) (-1,-1,0) (-1,1,0)};
\addplot3 [no marks] coordinates {(-1,3,0) (-1,1,0)};
\addplot3 [no marks] coordinates {(1,1,0) (3,1,0)};
\addplot3 [no marks] coordinates {(1,-1,0) (1,-3,0)};
\addplot3 [no marks] coordinates {(-1,-1,0) (-3,-1,0)};

\addplot3 [no marks,densely dashed] coordinates {(-1,-1,1) (-1,3,0)};
\addplot3 [no marks,densely dashed] coordinates {(-1,1,0.5) (3,1,0)};
\addplot3 [no marks,densely dashed] coordinates {(-1,1,0.5) (3,1,0)};
\addplot3 [no marks,densely dashed] coordinates {(1,1,0.25) (1,-3,0)};
\addplot3 [no marks,densely dashed] coordinates {(1,-1,0.125) (-3,-1,0)};
\addplot3 [no marks,densely dashed] coordinates {(-1,-1,0.0625) (-1,3,0)};

\fill [fill opacity=0.5,fill=gray] (axis cs:-1,1,0) -- (axis cs:1,1,0)--(axis cs:1,-1,0)--(axis cs:-1,-1,0);
\fill (axis cs:-1,3,0) circle[radius=2pt];
\fill (axis cs:3,1,0) circle[radius=2pt];
\fill (axis cs:1,-3,0) circle[radius=2pt];
\fill (axis cs:-3,-1,0) circle[radius=2pt];
\fill (axis cs:-1,-1,1) circle[radius=2pt];
\end{axis}
\end{tikzpicture}
\caption{A 5-point set $K \subseteq \mathbb{R}^{2 \times 2}_{\tri}$ together with 5 rank-one lines in $L(K)$. The dashed lines indicate rank-one lines in $L(K)$, which spiral toward the diagonal plane and make $L(K)$ non-compact. }
\label{uppertriag}
\end{figure}
\begin{thm} \leavevmode 
\begin{enumerate}[(i)]
\item There exists a 5-point set $K \subseteq \mathbb{R}^{2 \times 2}_{\tri}$ such that $L(K)$ is not compact. 
\item There exists a 5-point set $K \subseteq \mathbb{R}^{2 \times 2}_{\sym}$ such that $L(K)$ is not compact. 
\end{enumerate}
\end{thm}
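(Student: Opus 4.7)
The candidate for part (i), read off Figure \ref{uppertriag}, is
\[ K = \{A_1,\dots,A_4,C\} := \{(-1,3,0),\ (3,1,0),\ (1,-3,0),\ (-3,-1,0),\ (-1,-1,1)\}. \]
In $\mathbb{R}^{2\times 2}_{\tri}$, two points $(x_i,y_i,z_i)$ are rank-one connected iff $(x_1-x_2)(y_1-y_2)=0$, i.e.\ iff they share an $x$- or $y$-coordinate. The four $A_i$ have pairwise distinct $x$- and pairwise distinct $y$-coordinates, so no two of them are rank-one connected, while $C$ is rank-one connected to exactly $A_1$ (via $x=-1$) and $A_4$ (via $y=-1$).

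Starting from $P_0 = C$, I would form the spiral $P_{n+1} = \tfrac12 P_n + \tfrac12 A_{(n \bmod 4)+1}$; each step is a rank-one midpoint by the sharing pattern above, so $P_n \in L(K)$. A direct calculation gives $z(P_n) = 2^{-n}$ with $(x(P_n),y(P_n))$ cycling through the four corners $(\pm 1,\pm 1)$ of the inner rectangle; in particular $P_{4k}\to (-1,-1,0)$. The main step is then to show $(-1,-1,0)\notin L(K)$. For this I would use the iterative definition $L_0(K)=K$ and $L_{j+1}(K)=\{(1-t)P+tQ : P,Q\in L_j(K),\ \rank(P-Q)\le 1,\ t\in[0,1]\}$, so $L(K)=\bigcup_j L_j(K)$. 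Because $K\subseteq\{z\ge 0\}$, induction gives $L_j(K)\subseteq\{z\ge 0\}$; moreover, for $t\in(0,1)$, a rank-one combination with $z$-coordinate $0$ forces both endpoints to have $z=0$. Combined with the absence of rank-one connections among the $A_i$, induction yields $L_j(K)\cap\{z=0\}\subseteq\{A_1,\dots,A_4\}$ for every $j$. Hence $(-1,-1,0)\notin L(K)$, $L(K)$ is not closed, and (i) follows.

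For part (ii), the rank-one condition in $\mathbb{R}^{2\times 2}_{\sym}$ is $(x_1-x_2)(y_1-y_2)=(z_1-z_2)^2$, which reduces to the upper-triangular condition on $\{z=0\}$. I would reuse the same four $T_4$ vertices $A_1,\dots,A_4$ (they remain pairwise not rank-one connected) and replace $C$ by a point $(x_0,y_0,z_0)$ with $z_0>0$ lying in the intersection of the symmetric rank-one cones through $A_1$ and $A_4$; the equations
\[ (x_0+1)(y_0-3)=z_0^2=(x_0+3)(y_0+1) \]
cut out a one-parameter family of admissible $C$. The $\{z\ge 0\}$ invariant transfers verbatim to the symmetric setting, so once a suitable spiral is produced, the exclusion of its limit from $L(K)$ is identical to the argument in (i).

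The main obstacle is part (ii): the quadratic rank-one constraint means that the link from the current spiral point to the next vertex $A_{i+1}$ is determined by a quadratic in the segment parameter, and one must verify that $C$ can be chosen so that every such quadratic has a root in $(0,1)$ producing a strictly smaller $z$-coordinate than its predecessor, giving an infinite sequence of successive rank-one midpoints whose $z$-coordinates tend to $0$. Once this algebraic check is in hand, the non-compactness conclusion follows by the same $\{z\ge 0\}$ argument as in part (i).
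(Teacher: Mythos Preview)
Your argument for (i) is correct and is the paper's construction specialised to the symmetric $T_4$ with all arm lengths equal, so that every step parameter is $\tfrac12$ and the spiral hits the inner-rectangle corners exactly. The only cosmetic difference is the exclusion step: rather than inducting on $L_j(K)\cap\{z=0\}$, the paper simply observes that $\{z>0\}\cup\{A_1,\dots,A_4\}$ is lamination convex, contains $K$, and omits $(-1,-1,0)$; your induction is of course equivalent, and transfers unchanged to the symmetric case as you say.

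For (ii) you have the right picture but one misstatement and one genuine gap. The misstatement: along the rank-one segment $[A_i,B_i]$ the function $t\mapsto\det\bigl((1-t)A_i+tB_i-A_{i+1}\bigr)$ is \emph{linear}, not quadratic, because the $t^2$ coefficient is $\det(B_i-A_i)=0$; so the next spiral point is determined by a single linear equation, and the only question is whether its solution $t_i$ lies in $(0,1)$. The gap is the estimate that actually makes the spiral converge: strict decrease of $z$ at each step does not force $z\to 0$; you need a uniform contraction $t_0t_1t_2t_3\le c<1$ valid through all iterates. The paper obtains this by perturbation: parametrise $C$ by its height $\xi_3>0$ over the corner, note that each $t_i\to\tfrac12$ as $\xi_3\to 0$ by continuity, and take $\xi_3$ small enough that $t_0t_1t_2t_3<\tfrac12(1+\tfrac1{16})$. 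One further check, also handled by continuity in $\xi_3$, is that after a full cycle the new point $B_4$ lies on the \emph{same branch} of the two-valued intersection curve of the cones through $A_1$ and $A_4$ as $C$ did, so the argument can be restarted verbatim with $B_4$ in place of $C$.
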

\begin{sloppypar}
\begin{proof} For part (i) let $x_1 < x_2$, $y_2< y_1$, $z_0 > 0$ and $\alpha_0, \alpha_1,\alpha_2, \alpha_3 >0$.  Let 
\[ P_0 = (x_1,y_2,0), \quad P_1=(x_1,y_1,0), \quad P_2=(x_2,y_1,0), \quad P_3=(x_2,y_2,0), \] 
and set
\begin{align*} A_0 &= (x_1,y_1+ \alpha_0,0),  \quad A_1 = (x_2+ \alpha_1,y_1,0),  \\
A_2 &= (x_2,y_2-\alpha_2,0), \quad A_3 = (x_1-\alpha_3,y_2,0). \end{align*}
For $i\in \{0,1,2,3\}$ let $A_4=A_0$ and
\[ \lambda_i = \frac{\det(A_i-A_{i+1})}{\det(A_i-A_{i+1})-\det(P_i-A_{i+1})} \in (0,1),\]
let $X_0 = P_0 + (0,0,z_0)$ and $K = \{A_0,A_1,A_2,A_3,X_0\}$. For $i \geq 0$ let 
\[ X_{i+1} = (1-\lambda_{i \bmod 4}) A_{i \bmod 4} + \lambda_{ i \bmod 4} X_i, \]
so that for $i \geq 0$ and $k \in \{0,1,2,3\}$, induction gives 
\[ X_{4i+k} = P_k +(\lambda_0\lambda_1 \lambda_2 \lambda_3)^i \left(\prod_{j=0}^{k-1} \lambda_j \right) (0,0,z_0), \quad  \det(X_i-A_{i \bmod 4}) = 0,\]
which implies that $X_i \in L(K)$ for every $i \geq 0$. Hence $P_0 \in \overline{L(K)}$, and it remains to show that $P_0 \notin L(K)$. This follows from the fact that
\[ \{ (x,y,z) \in \mathbb{R}^{2 \times 2}_{\tri} : z >0 \} \cup \{A_0, A_1,A_2,A_3\} \]
is a lamination convex set containing $K$, which does not contain $P_0$.

For part (ii), let all the scalars and diagonal points be the same as in part (i). Using the symmetric notation let $Y_0 = P_0 + \left(\xi_1,\xi_2,\xi_3 \right)$ where $\xi_3 >0$ and 
\[ \xi_1 = \frac{1}{2} \left( -\alpha_3 + \sqrt{\alpha_3^2 - \frac{4\alpha_3 \xi_3^2}{y_1+\alpha_0-y_2} } \right), \quad \xi_2 =  \frac{-\xi_1(y_1+\alpha_0-y_2)}{\alpha_3}. \]
so that $\det(Y_0-A_0)=\det(Y_0-A_3)=0$, and $Y_0 \to P_0$ as $\xi_3 \to 0$. 
The fact that $\det(P_0-A_1)>0>\det(A_0-A_1)$ means that
\[ \det(Y_0 - A_{1}) > 0 >  \det(A_0 - A_{1}), \]
whenever $\xi_3 \in (0, \epsilon_1)$, for some $\epsilon_1 >0$. Set $B_0=Y_0$. For $i \in \{0,1,2,3\}$ and $B_i$ with
\[\det(B_i - A_{i+1}) \neq 0 \text{ and } \sgn \det(B_i - A_{i+1}) \neq \sgn \det(A_i - A_{i+1}), \]
let 
\[ B_{i+1} = (1-t_i)A_i + t_iB_i, \text{ where } t_i = \frac{\det(A_i-A_{i+1})}{\det(A_i-A_{i+1})-\det(B_i-A_{i+1})} \in (0,1), \]
so that $\det(B_{i+1}- A_{i+1}) =0$. By induction $t_i \to \lambda_i$ as $\xi_3 \to 0$ for $i \in \{0,1,2,3\}$, $B_i \to P_{i \bmod 4}$ as $\xi_3 \to 0$ for each $i \in \{0,1,2,3,4\}$, and $B_1, B_2, B_3, B_4$ all exist if $\xi_3$ is sufficiently small. Hence there exists $\epsilon_2>0$ such that $(t_0t_1t_2t_3) < \frac{1}{2} \left( 1+\lambda_0\lambda_1\lambda_2\lambda_3\right)$ and $B_1, B_2, B_3, B_4$ all exist whenever $\xi_3 \in (0,\epsilon_2)$. Put \mbox{$(\eta_1, \eta_2, \eta_3) = B_4 - P_0$}. Then since $\det(B_4-A_0)=\det(B_4-A_3)=0$,
\begin{equation} \label{bfour} \eta_1 = \frac{1}{2} \left( -\alpha_3 \pm \sqrt{\alpha_3^2 - \frac{4\alpha_3 \eta_3^2}{y_1+\alpha_0-y_2} } \right), \quad \eta_2 =  \frac{-\eta_1(y_1+\alpha_0-y_2)}{\alpha_3}. \end{equation}
But since $B_4 \to P_0$ as $\xi_3 \to 0$, there exists $\epsilon_3>0$ such that the sign in \eqref{bfour} is positive whenever $\xi_3 \in (0,\epsilon_3)$. Let $\epsilon = \min\{\epsilon_1, \epsilon_2, \epsilon_3\}$. If $\xi_3 \in (0, \epsilon)$. then 
\begin{equation} \label{iteratestart} \eta_3 = (t_0t_1t_2t_3)\xi_3 < \frac{1}{2}(1+ \lambda_0\lambda_1 \lambda_2\lambda_3) \xi_3. \end{equation}
Therefore let $K= \{A_0,A_1,A_2,A_3,Y_0\}$, and set $Y_1 = B_4$. Then $Y_1 \in L(K)$ by the preceding working. By \eqref{iteratestart}, iterating this process gives a sequence $Y_n \in L(K)$ with $Y_n \to P_0 \in \overline{L(K)}$. Again the point $P_0$ is not in $L(K)$ since
\[ \{ (x,y,z) \in \mathbb{R}^{2 \times 2}_{\sym} : z >0 \} \cup \{A_0, A_1,A_2,A_3\} \]
is a lamination convex set separating $P_0$ from $K$.  Hence $L(K)$ is not compact.   
\end{proof} \end{sloppypar}

\begin{sloppypar}
 A function \mbox{$f: \mathbb{R}^{m \times n} \to \mathbb{R}$} is called \textit{rank-one convex} if 
\[ f(\lambda X + (1-\lambda)Y) \leq \lambda f(X)+(1-\lambda)f(Y) \text{ for all } \lambda \in [0,1], \]
whenever $\rank(X-Y) \leq 1$.  The rank-one convex hull of a compact set \mbox{$K \subseteq \mathbb{R}^{m \times n}$} is defined by
\[ K^{rc} = \{ X \in \mathbb{R}^{m \times n} :  f(X) \leq 0 \quad \forall \text{ rank-one convex } f \text{ with } f|_K \leq 0 \}. \]
The polyconvex hull is defined similarly via polyconvex functions;  a function \mbox{$f: \mathbb{R}^{2 \times 2} \to \mathbb{R}$} is polyconvex if there exists a convex function \mbox{$g: \mathbb{R}^{2 \times 2} \times \mathbb{R} \to \mathbb{R}$} such that \mbox{$f(X) = g(X, \det X)$} for all $X \in \mathbb{R}^{2 \times 2}$. For compact $K$, the following characterisation of $K^{pc}$ will be used (see Theorem 1.9 in \cite{kirchheim}):
\begin{equation} \label{barycentre} K^{pc} = \{ \overline{\mu} : \mu \in \mathscr{M}_{pc}(K) \}, \end{equation}
where $\mathscr{M}_{pc}(K)$ is the class of probability measures supported in $K$ which satisfy Jensen's inequality for all polyconvex $f$;
\[ f(\overline{\mu}) \leq \int_{\mathbb{R}^{2 \times 2} } f(X) \:d\mu(X) \quad \text{ where } \quad \overline{\mu} = \int_{\mathbb{R}^{2 \times 2} }X \: d\mu(X). \]  \end{sloppypar}
\begin{defn} \label{tfourdefn} An ordered set $\{X_i\}_{i=1}^4 \subseteq \mathbb{R}^{m \times n}$ without rank-one connections is called a $T_4$ configuration if there exist matrices $P,C_1,C_2,C_3,C_4 \in \mathbb{R}^{m \times n}$ and real numbers $\mu_1,$ $\mu_2,$ $\mu_3,$ $\mu_4 >1$ satisfying
\[\rank C_i = 1 \text{ for } 1 \leq i \leq 4, \quad \sum_{i=1}^4 C_i = 0, \]
and
\begin{align}\notag  X_1 &= P+ \mu_1 C_1 \\
\notag X_2 &= P+ C_1+ \mu_2 C_2 \\
\notag X_3 &= P+ C_1+C_2+ \mu_3 C_3 \\
\label{t4defn} X_4 &= P+ C_1+C_2+C_3+ \mu_4 C_4. \end{align}
An unordered set $\{X_i\}_{i=1}^4$ is a $T_4$ configuration if it has at least one ordering which is a $T_4$ configuration. \end{defn}

The following result is a slight generalisation of Theorem 1 in \cite{szekelyhidi} (see also Corollary 3 in \cite{faraco}). The proof is similar to the one in \cite{szekelyhidi}, with minor technical changes.
\begin{thm} \label{mainthm} If $K \subseteq \mathbb{R}^{2 \times 2}$ is compact, and does not have a $T_4$ configuration $\{X_i\}_{i=1}^4$ with at least two $X_i,X_j$ in distinct connected components of $L(K)$, then
\[ K^{rc} = \bigcup_{i} (U_i \cap K)^{rc} \quad \text{ and } \quad  K^{qc} = \bigcup_{i} (U_i \cap K)^{qc}, \]
where the $U_i$ are the connected components of $L(K)$. \end{thm}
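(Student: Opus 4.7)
Let $V := \bigcup_i (U_i \cap K)^{rc}$. My plan is to adapt the proof of Theorem 1 in \cite{szekelyhidi}, tracking the connected component of $L(K)$ in which each point lies. The $K^{qc}$ statement would follow by the same scheme with quasiconvex test functions in place of rank-one convex ones.

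The inclusion $V \subseteq K^{rc}$ is immediate: any rank-one convex $f$ with $f|_K \leq 0$ satisfies $f|_{U_i \cap K} \leq 0$, hence $f \leq 0$ on each $(U_i \cap K)^{rc}$ and therefore on $V$. The analogous argument with quasiconvex $f$ gives $\bigcup_i (U_i \cap K)^{qc} \subseteq K^{qc}$.

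For the reverse inclusion $K^{rc} \subseteq V$, I would argue by contradiction: assume there exists $X \in K^{rc} \setminus V$. Following \cite{szekelyhidi}, a point in $K^{rc}$ is represented by an iterative rank-one splitting, producing a tree whose internal nodes lie in $K^{rc}$ and whose (approximating) leaves lie in $K$; a compactness and direction-pigeonhole argument then extracts a 4-cycle from this tree that forms a $T_4$ configuration in $K^{rc}$, and after perturbation, in $K$. The key observation for the generalisation is that any rank-one segment $[Y,Z]$ with both endpoints in the same $(U_i \cap K)^{rc}$ lies entirely in $(U_i \cap K)^{rc}$, by lamination convexity of the rc-hull. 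Consequently, in any splitting $X' = \lambda Y + (1-\lambda) Z$ appearing in the construction with $X' \notin V$, either at least one of $Y, Z$ still lies outside $V$ (in which case we iterate on that endpoint), or $Y$ and $Z$ belong to rc-hulls of distinct components of $L(K)$. Carrying this ``component label'' through the construction, the resulting $T_4$ has at least two vertices in different components of $L(K)$, contradicting the hypothesis.

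The principal technical obstacle, as in \cite{szekelyhidi}, is the extraction of the 4-cycle from the infinite splitting tree via a direction-restricted pigeonhole argument with uniform lower bounds on segment lengths. The additional bookkeeping needed here is to verify that the extracted $T_4$ genuinely crosses components of $L(K)$, which follows because if the entire splitting tree from $X$ stayed inside a single $(U_i \cap K)^{rc}$, then $X$ itself would lie in $(U_i \cap K)^{rc} \subseteq V$ by iterated lamination convexity, contrary to the choice of $X$.
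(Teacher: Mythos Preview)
Your proposal is essentially the same approach the paper takes: the paper does not give a detailed proof of this theorem at all, stating only that ``The proof is similar to the one in \cite{szekelyhidi}, with minor technical changes.'' Your outline of how to carry the component label through the splitting tree and force the extracted $T_4$ to straddle two components of $L(K)$ is exactly the kind of minor technical change the paper alludes to, and your easy-direction argument and dichotomy for the splittings are both sound.
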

On diagonal matrices the conclusion reduces to $K^{rc} = L(K)$.  The following proposition shows that this fails in the full space $\mathbb{R}^{2 \times 2}$. \begin{sloppypar}
\begin{prop} \label{counter1}
There exists a 5-point set $K \subseteq \mathbb{R}^{2 \times 2}$ with $L(K)$ connected, compact and strictly smaller than $K^{rc}$.
\end{prop}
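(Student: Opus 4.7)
The plan is to construct an explicit 5-point set $K\subseteq\mathbb{R}^{2\times 2}$ consisting of a $T_4$ configuration $\{X_1,X_2,X_3,X_4\}$ together with a fifth point $Y$. By Definition~\ref{tfourdefn} the four $X_i$ have no rank-one connections, so $L(\{X_1,\ldots,X_4\})=\{X_1,\ldots,X_4\}$ is a disconnected 4-point set, while the central matrix $P$ of the $T_4$ belongs to $\{X_i\}^{rc}\subseteq K^{rc}$ by iterated rank-one splitting. The task is therefore to choose $Y$ so that $L(K)$ becomes connected and compact while $P$ still lies outside $L(K)$.

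Geometrically I would copy the upper-triangular picture from the preceding theorem: place the $T_4$ as four outer vertices $A_0,A_1,A_2,A_3$ in the diagonal plane of $\mathbb{R}^{2\times 2}_{\tri}$, arranged around an inner rectangle with corners $P_0,P_1,P_2,P_3$. The fifth point $Y$ must lie off the diagonal plane (that is, with $z\neq 0$), since otherwise $L(K)$ would sit in the diagonal plane where lamination convexity and rank-one convexity coincide, forcing $L(K)=K^{rc}$. However $Y$ cannot be the choice $P_0+(0,0,z_0)$ used in the preceding theorem, because that produces an infinite rank-one spiral $X_n\to P_0$ and yields non-compact $L(K)$; instead I would perturb $Y$ (and/or the $T_4$ parameters) so that the cascade of iterated rank-one intersections terminates after finitely many steps, for instance by landing on an existing point of $K$ or by exhausting the available rank-one alignments between newly produced laminates.

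Once $L(K)$ is realised as a finite union of rank-one segments, compactness is immediate and connectedness is a check on the incidence pattern of the segments. The separation $P\notin L(K)$ would be established by exhibiting a lamination-convex barrier of the same type as in the preceding theorem, namely a set of the form $\{z>0\}\cup\{A_0,A_1,A_2,A_3\}$ (or a minor modification adapted to the location of $Y$); combined with $P\in K^{rc}$ this yields $L(K)\subsetneq K^{rc}$.

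The chief obstacle is finding a single $Y$ that simultaneously achieves (a) a connected $L(K)$, which requires enough rank-one edges from $Y$ to link all four $A_i$'s, (b) a compact $L(K)$, which requires the rank-one cascade to terminate after finitely many steps, and (c) $P\notin L(K)$. Generic placements of $Y$ tend either to disconnect $L(K)$ or to reproduce the infinite spiral of the preceding theorem, so a carefully tuned algebraic choice of coordinates together with a detailed case analysis of the finitely many rank-one lines generated by the construction will be required.
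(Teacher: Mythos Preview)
Your proposal is a plan rather than a proof: the ``chief obstacle'' you name at the end is exactly the content of the proposition, and you have not resolved it. In the upper-triangular coordinates $(x,y,z)$ the rank-one directions are the planes $x=\text{const}$ and $y=\text{const}$, so a single off-plane point $Y$ is rank-one connected to at most two of the $A_i$ (one sharing its $x$-coordinate, one sharing its $y$-coordinate). Connecting the remaining two $A_i$ forces you to generate secondary laminates, and once the cascade starts the $z$-coordinate along successive rank-one segments scales by a fixed product $\lambda_0\lambda_1\lambda_2\lambda_3\in(0,1)$; it cannot reach $0$ in finitely many steps unless it starts at $0$, which would put $Y$ back in the diagonal plane. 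Your suggestion of tuning $Y$ so that the cascade ``lands on an existing point of $K$'' therefore does not work in this geometry, and no alternative termination mechanism is offered.

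The paper avoids the cascade entirely by abandoning the diagonal/upper-triangular template. It chooses the $T_4$ so that every $X_i$ has $\det X_i=0$, i.e.\ each $X_i$ is itself of rank one, and takes the fifth point to be the origin. Then $0$ is rank-one connected to every $X_i$, so $L(K)$ is the four-armed star $\bigcup_{i=1}^4[0,X_i]$, which is manifestly connected and compact. A two-line determinant argument (using $\det X_i=\det X_j=0$ but $\det(X_i-X_j)\neq 0$, and linearity of $\det$ along rank-one lines) shows that no $sX_i$ and $tX_j$ with $i\neq j$ and $s,t\in(0,1]$ are rank-one connected, so the star is already lamination convex. The $T_4$ structure then places one of the inner corners $P_1$ in $K^{rc}\setminus L(K)$. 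The key idea you are missing is to let the fifth point serve as a rank-one \emph{hub} for all four $T_4$ vertices simultaneously, rather than as the seed of an iterated construction.
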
\end{sloppypar}
 \begin{sloppypar}
\begin{proof} Fix $\epsilon \in (0,1)$, let
\[ X_1 = \begin{pmatrix} 1 & 0 \\ 0 & 0\end{pmatrix}, \quad  X_2 = \begin{pmatrix} 0 & 0 \\ 0 & 1\end{pmatrix}, \quad X_3 = \begin{pmatrix} -\epsilon & -1 \\ -\epsilon^2 & -\epsilon \end{pmatrix},\quad  X_4 = \begin{pmatrix} -\epsilon & \epsilon^2 \\ 1 & -\epsilon\end{pmatrix}, \]
and let
\begin{equation} \label{mudefns} \mu_1 =\frac{1+2\epsilon}{\epsilon(1-\epsilon^2)}, \quad \mu_2 = 1+\epsilon^2 \mu_1, \quad \mu_3 = 1+ \left(\frac{1+\epsilon^2}{\epsilon} \right)\mu_2, \quad \mu_4 = 1+ \epsilon^2 \mu_3, \end{equation}
so that 
\begin{equation} \label{mudefns2} \mu_1 = 1+ \frac{\mu_4}{\epsilon(1+\epsilon^2)}. \end{equation}
Set
\begin{align*} P_1 &= \frac{1}{\epsilon(\mu_1-1)} \begin{pmatrix} -\epsilon & 0 \\ 1 & 0 \end{pmatrix}, \quad P_2 = \frac{1}{\mu_1\epsilon} \begin{pmatrix} 0 & 0 \\ 1 & 0 \end{pmatrix}, \\
P_3 &=\frac{1}{\mu_2} \begin{pmatrix} 0 & 0 \\ \epsilon & 1 \end{pmatrix}, \quad P_4 = \frac{1}{\mu_3 \epsilon}\begin{pmatrix} -\epsilon^2 & -\epsilon  \\ \epsilon & 1 \end{pmatrix}, \end{align*}
and let $C_i = P_{i+1}-P_i$, where $P_5 := P_1$. Then clearly $\rank C_i = 1$ for all $i$, whilst \eqref{mudefns} and \eqref{mudefns2} imply that this is a solution of \eqref{t4defn}. 
Let
\[ K = \{0, X_1, X_2,X_3,X_4\}, \quad \text{ so that } \quad L(K)= \bigcup_{i=1}^4 \left[0, X_i \right]. \]
To prove the second formula for $L(K)$, it suffices to show that the set \mbox{$\mathcal{S} = \bigcup_{i=1}^4 \left[0, X_i \right]$} is lamination convex. For $i \neq j$, the fact that $\det X_i = \det X_j = 0$ and \mbox{$\det (X_i-X_j) \neq 0$} implies that $\det(X_i - tX_j) \neq 0$ whenever $t \in (0,1]$, since the determinant is linear along rank-one lines. It follows similarly that \mbox{$\det(sX_i - tX_j) \neq 0$} for $s,t \in (0,1]$, and so the only rank-one connected pairs in $\mathcal{S}$ are 0 and $tX_i$ for any $i$. Hence $\mathcal{S}$ is lamination convex. By Lemma 2 in \cite{szekelyhidi}, the point $P_1$ is in $K^{rc} \setminus L(K)$, so this proves the proposition. \end{proof} \end{sloppypar}

The preceding example contrasts with Lemma 3 in \cite{sverak2}, which states (in a weakened form) that $K^{pc} =K$ if $K$ is a connected compact subset of $\mathbb{R}^{2 \times 2}$ without rank-one connections. The example shows that the assumption that $K$ has no rank-one connections cannot be weakened to $L(K)=K$. The reason is that $\det(X-Y)$ cannot change sign on connected subsets of $\mathbb{R}^{2 \times 2}$ without rank-one connections, whilst it can on lamination convex sets. If the assumption that $\det(X-Y)$ does not change sign is added, $K^{pc}$ is equal to the lamination hull of order 2: given a set $K \subseteq \mathbb{R}^{m \times n}$, let $L^{(0)}(K)=K$ and define $L^{(k)}(K)$ inductively by 
\[ L^{(k+1)}(K) = \bigcup_{ \substack{X, Y \in L^{(k)}(K) \\ \rank(X-Y) \leq 1} } [X,Y]. \]
\begin{sloppypar}
\begin{prop} If $K \subseteq \mathbb{R}^{2 \times 2}$ is a compact set such that \mbox{$\det(X-Y) \geq 0$} for every $X,Y \in K$, then $K^{pc} = L^{(2)}(K)$. \end{prop}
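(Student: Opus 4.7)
The inclusion $L^{(2)}(K) \subseteq K^{pc}$ is standard: since polyconvex functions are rank-one convex, $L^{(k)}(K) \subseteq K^{rc} \subseteq K^{pc}$ for every $k$. For the reverse direction, the plan is to take $X \in K^{pc}$ and use \eqref{barycentre} to pick a probability measure $\mu \in \mathscr{M}_{pc}(K)$ with barycenter $X$. Since $\det$ and $-\det$ are both polyconvex (each being affine in $(Y, \det Y)$), Jensen's inequality forces
\[ \det X = \int_{\mathbb{R}^{2 \times 2}} \det Y \, d\mu(Y). \]
The pushforward of $\mu$ under $Y \mapsto (Y, \det Y)$ is a probability measure on the compact set $\{(Y, \det Y) : Y \in K\} \subseteq \mathbb{R}^5$ with barycenter $(X, \det X)$; this barycenter thus lies in the convex hull, so Carathéodory's theorem provides
\[ X = \sum_{i=1}^m \lambda_i X_i, \qquad \det X = \sum_{i=1}^m \lambda_i \det X_i, \]
for some $m \leq 6$, points $X_i \in K$, and weights $\lambda_i > 0$ with $\sum_{i=1}^m \lambda_i = 1$.

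The next step is the identity obtained by polarizing the quadratic form $\det$:
\[ \sum_i \lambda_i \det X_i - \det\left( \sum_i \lambda_i X_i \right) = \frac{1}{2} \sum_{i,j} \lambda_i \lambda_j \det(X_i - X_j). \]
Combined with the hypothesis $\det(X_i - X_j) \geq 0$ and $\lambda_i \lambda_j > 0$, the vanishing of the left-hand side forces $\det(X_i - X_j) = 0$ for all $i, j$. I would then establish the following structural claim: any pairwise rank-one connected subset of $\mathbb{R}^{2 \times 2}$ lies in a common 2-dimensional affine subspace $\Pi$ of the form $P + a \mathbb{R}^2$ or $P + \mathbb{R}^2 b^T$ (with $a, b$ nonzero), inside which any two points are rank-one connected. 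Writing each $X_i - X_1 = a_i b_i^T$, the rank-one condition on $X_i - X_j = a_i b_i^T - a_j b_j^T$ forces, for each pair, either $a_i \parallel a_j$ or $b_i \parallel b_j$; a short case analysis (depending on whether some triple among the $X_i$'s is non-collinear on a rank-one line) pins all of them into a single such plane $\Pi$.

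Finally, applying Carathéodory inside the 2-dimensional $\Pi$ reduces the representation to $m \leq 3$. The cases $m \leq 2$ give $X \in L^{(1)}(K) \subseteq L^{(2)}(K)$ directly. For $m = 3$, set $Y = (\lambda_2 X_2 + \lambda_3 X_3)/(\lambda_2 + \lambda_3) \in [X_2, X_3] \subseteq L^{(1)}(K)$; since $X_1, Y \in \Pi$, we have $\rank(X_1 - Y) \leq 1$, and hence $X = \lambda_1 X_1 + (\lambda_2 + \lambda_3) Y \in L^{(2)}(K)$. The main technical obstacle is the structural claim about rank-one planes, which is the bridge between the algebraic condition (pairwise rank-one connections, extracted from the polarization identity) and the geometric one (a common 2-dimensional plane) that makes the two-level decomposition possible.
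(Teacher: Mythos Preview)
Your argument is correct and follows essentially the same route as the paper: the polarization identity (the paper writes it as a double integral over $\mu\times\mu$, you write it as a finite sum after a preliminary Carath\'eodory step in $\mathbb{R}^5$) forces pairwise rank-one connections, the structural claim then places everything in a single $2$-plane of rank-one directions, and Carath\'eodory in that plane gives the two-level splitting. The only difference is order of operations---the paper applies the structural claim directly to the support of $\mu$ and skips your $\mathbb{R}^5$ reduction---and your ``main technical obstacle'' is exactly what the paper isolates as Lemma~\ref{linalg}.
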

\begin{proof} If $\mu$ is a probability measure supported in $K$ with $\det \overline{\mu} = \int_{\mathbb{R}^{2 \times 2}} \det X \: d \mu$, then as in \cite{sverak2},
\[ \int_{\mathbb{R}^{2 \times 2}} \int_{\mathbb{R}^{2 \times 2}} \det(X-Y) \: d\mu(X) \: d\mu(Y) = 0, \]
and therefore $\det(X-Y) =0$ whenever $X$ and $Y$ are in the support of $\mu$. This implies (see the following Lemma \ref{linalg}) that the support of $\mu$ is contained in a 2-dimensional affine plane $P$ consisting only of rank-one directions. Therefore $\overline{\mu} \in (K \cap P)^{co}$, and so Carathéodory's Theorem gives 3 points $X_i \in K \cap P$ such that $\overline{\mu}$ is a convex combination $ \overline{\mu}=\lambda_1 X_1 + \lambda_2 X_2 + \lambda_3 X_3$, and without loss of generality $\lambda_1 \neq 0$. Then $\frac{ \lambda_1}{\lambda_1+\lambda_2} \cdot X_1 + \frac{ \lambda_2}{\lambda_1+\lambda_2} \cdot X_2 \in P \cap L^{(1)}(K)$ since $P$ is a plane consisting of rank-one directions, and similarly
\[ \overline{\mu} = (\lambda_1+\lambda_2)\left( \frac{ \lambda_1}{\lambda_1+\lambda_2} \cdot X_1 + \frac{ \lambda_2}{\lambda_1+\lambda_2} \cdot X_2 \right) + \lambda_3 X_3 \in L^{(2)}(K). \]
It follows from \eqref{barycentre} that \mbox{$K^{pc} =L^{(2)}(K)$}. 
\end{proof}
\end{sloppypar}
\begin{lem} \label{linalg} Let $X_0, Y_0 \in \mathbb{R}^{m \times n}$ satisfy $\rank(X_0-Y_0) = 1$, and let
\[ \mathcal{S} = \{ X \in \mathbb{R}^{m \times n} : \rank(X-X_0) \leq 1 \text{ and } \rank(X-Y_0) \leq 1 \}. \] 
Then:
\begin{enumerate}[(i)]
\item $\mathcal{S} = P_1 \cup P_2$, where $P_1$ is an $m$-dimensional affine plane and $P_2$ is an $n$-dimensional affine plane, and for each fixed $i$, $\rank(X-Y) \leq 1 \text{ for } X,Y \in P_i$. 
\item The planes $P_1$ and $P_2$ satisfy 
\[ \rank(X-Y) > 1 \quad \text{ for } \quad X \in P_1 \setminus P_2 \quad \text{ and } \quad Y \in P_2 \setminus P_1. \] \end{enumerate}
\end{lem}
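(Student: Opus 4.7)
The plan is to exploit rank-one factorizations. Since $\rank(X_0 - Y_0) = 1$, write $X_0 - Y_0 = a b^T$ with nonzero $a \in \mathbb{R}^m$ and $b \in \mathbb{R}^n$, and introduce the candidate affine planes
\[ P_1 = X_0 + \{ u b^T : u \in \mathbb{R}^m\}, \qquad P_2 = X_0 + \{ a w^T : w \in \mathbb{R}^n\}, \]
of dimensions $m$ and $n$ respectively. The inclusion $P_1 \cup P_2 \subseteq \mathcal{S}$ and the rank-one property within each $P_i$ both follow directly from identities such as $X - X_0 = u b^T$ and $X - Y_0 = (u + a) b^T$ for $X = X_0 + u b^T \in P_1$, and the analogous identities on $P_2$.

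For the reverse inclusion I take $X \in \mathcal{S}$ and write $X - X_0 = u_1 v_1^T$ together with $X - Y_0 = u_2 v_2^T$, so that $u_2 v_2^T = u_1 v_1^T + a b^T$ with the left-hand side of rank at most one. The main lemma I need is the standard fact that a sum of two rank-one matrices has rank at most one precisely when either their column vectors are proportional or their row vectors are proportional (otherwise one can evaluate the sum on two carefully chosen vectors to produce two linearly independent images, giving rank $\geq 2$). Applied to the identity above, and after dispatching the trivial case $X = X_0$, this forces either $v_1 \parallel b$, placing $X \in P_1$, or $u_1 \parallel a$, placing $X \in P_2$.

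For part (ii), I first observe that $X \in P_1 \cap P_2$ iff $X - X_0$ can be written both as $u b^T$ and as $a w^T$, which forces $u$ to be a scalar multiple of $a$; hence any $X \in P_1 \setminus P_2$ has $u$ and $a$ linearly independent, and symmetrically $Y \in P_2 \setminus P_1$ has $Y = X_0 + a w^T$ with $w$ and $b$ linearly independent. Then $(X - Y) v = u (b \cdot v) - a (w \cdot v)$ for $v \in \mathbb{R}^n$, and since $b$ and $w$ are linearly independent, one can choose $v_1, v_2$ with $(X - Y) v_1 = u$ and $(X - Y) v_2 = -a$. Since the image is contained in $\spa\{u, a\}$, this gives $\rank(X - Y) = 2$. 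The only real work is the rank-one sum lemma in the middle step; everything else is bookkeeping about linear independence and degenerate cases such as $u = 0$ or $w = 0$.
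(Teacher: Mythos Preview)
Your proof is correct and follows the same overall strategy as the paper: both define the identical planes $P_1$, $P_2$ and reduce the reverse inclusion in (i) to the fact that a sum of two nonzero rank-one matrices has rank at most one only when the left factors or the right factors are parallel. The paper proves this inline---evaluating the identity $vw^T - v_0w_0^T = ab^T$ on vectors orthogonal to $w_0$ and to $v_0$ and deriving a contradiction that actually places $X$ in $P_1 \cap P_2$---whereas you package it as a standalone ``rank-one sum lemma'' with essentially the same proof sketch.

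The one genuine difference is in part (ii). The paper argues by reapplying part (i) (with $X$ and $0$ playing the roles of $X_0$ and $Y_0$) to force $Y = xz^T$ and then reach a contradiction with $X \notin P_2$. You instead compute the rank of $ub^T - aw^T$ directly, using the linear independence of $\{b,w\}$ to exhibit preimages of $u$ and $-a$ and conclude $\rank(X-Y) = 2$. Your route here is a little more self-contained and avoids the recursive appeal to (i); the paper's route is shorter on the page but leans on the earlier part. Both are perfectly valid.
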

\begin{proof} By translation invariance it may be assumed that $Y_0=0$, so that \mbox{$\rank X_0=1$} and $X_0 = v_0w_0^T$ for some nonzero $v_0 \in \mathbb{R}^m$, $w_0 \in \mathbb{R}^n$. Let
\[ P_1 = \{ xw_0^T: x \in \mathbb{R}^m \}, \quad P_2 = \{ v_0y^T: y \in \mathbb{R}^n \}. \]  If $X \in \mathcal{S}$ then $X= vw^T$ for some $v \in \mathbb{R}^m$ and $w \in \mathbb{R}^n$, and
\begin{equation} \label{planeeq} X-X_0= vw^T - v_0w_0^T = ab^T, \end{equation}
for some $a \in \mathbb{R}^m$ and $b \in \mathbb{R}^n$. Suppose for a contradiction that $X \notin P_1 \cup P_2$. Then since $X \notin P_1$ there exists a vector $w_0^{\perp}$ such that $\langle w_0, w_0^{\perp} \rangle =0$ and $\langle w, w_0^{\perp} \rangle \neq 0$. Right multiplying both sides of \eqref{planeeq} with $w_0^{\perp}$ gives
\[ v = \frac{ \langle b, w_0^{ \perp} \rangle a }{\langle w, w_0^{\perp} \rangle }, \quad \text{ and similarly } \quad w = \frac{ \langle a, v_0^{ \perp} \rangle b }{\langle v, v_0^{\perp} \rangle }. \]
Let $\lambda = \frac{  \langle a, v_0^{ \perp} \rangle  \langle b, w_0^{ \perp} \rangle }{ \langle v, v_0^{\perp} \rangle \langle w, w_0^{\perp} \rangle }$. Then $\lambda \neq 1$ by \eqref{planeeq} since $v_0w_0^T \neq 0$, and therefore
\[ X=vw^T = \left(\frac{ \lambda }{\lambda -1 } \right) v_0w_0^T \in P_1 \cap P_2, \]
which is a contradiction. This proves part (i).

For part (ii), let $X= xw_0^T \in P_1 \setminus P_2$, let $Y = v_0 y^T \in P_2 \setminus P_1$ and suppose for a contradiction that $\rank(X-Y) = 1$. Then by part (i), $Y = xz^T$ for some nonzero $z \in \mathbb{R}^n$, and therefore $x = \frac{ v_0 \langle y,z \rangle }{\|z\|^2 }$, which contradicts the fact that $X \notin P_2$. 
\end{proof}

\section*{Acknowledgements}
I would like to thank Michael Cowling and Alessandro Ottazzi for advice on this topic, and for comments on the draft.


\end{document}